\begin{document}

\theoremstyle{plain}
\newtheorem{theorem}{Theorem}
\newtheorem{corollary}[theorem]{Corollary}
\newtheorem{lemma}{Lemma}
\newtheorem*{lemma*}{Lemma}
\newtheorem{proposition}[theorem]{Proposition}

\theoremstyle{definition}
\newtheorem{definition}[theorem]{Definition}
\newtheorem{example}[theorem]{Example}
\newtheorem{conjecture}[theorem]{Conjecture}

\theoremstyle{remark}
\newtheorem{remark}{Remark}
\begin{center}
{\Large{{\bf Approximation of the Gompertz function with a multilogistic function}}}
\medskip
\end{center}
\leftline{}
\leftline{\bf Grzegorz Rz\c{a}dkowski }

\leftline{Department of Finance and Risk Management, Warsaw University of Technology, Narbutta 85, 02-524 Warsaw, Poland}
\leftline{e-mail: g.k.rzadkowski@gmail.com}

\vspace{5pt}

\begin{abstract}
The paper deals with the comparison of the Gompertz function and the logistic function. We show that the Gompertz function can be approximated with high accuracy by a sum of three logistic functions (multilogistic function). Two of them are increasing and one is decreasing. We use second-order logistic wavelets to estimate the parameters of the multilogistic function.
\end{abstract}

Keywords:  Gompertz function, logistic function, logistic wavelet, scalogram.

2020 Mathematics Subject Classification: 42C40, 65T60, 11B83

\section{Introduction} 

The Gompertz function is described by the following autonomous differential equation of the first order
\begin{equation}\label{1a}
x'(t)=sx\log\frac{x_{sat}}{x} \quad \quad  x(0)=x_0>0, 
\end{equation}
with parameters $s-$growth rate and $x_{sat}-$saturation level (asymptote), $0<x_0<x_{sat}$;  $\log$ is the natural logarithm.  After solving (\ref{1a}) we can write the Gompertz function in the following  convenient form
\begin{equation}\label{1b}
x(t)=x_{sat}e^{-e^{-s(t-t_0)}}, 
\end{equation}
where constant $t_0$ appears in the integration process of (\ref{1a}) and is connected with the initial condition $x(0)=x_0=x_{sat}e^{-e^{st_0}}$, thus 
$t_0=\frac{1}{s}\log\log (x_{sat}/x_0)$. It is easy to check that $t_0$ is also the inflection point of $x(t)$ (\ref{1b}). Fig~\ref{fig1} shows an exemplary Gompertz funcion.

\begin{figure}[h!]
	\begin{center}
	 \includegraphics[height=6cm, width=8cm]{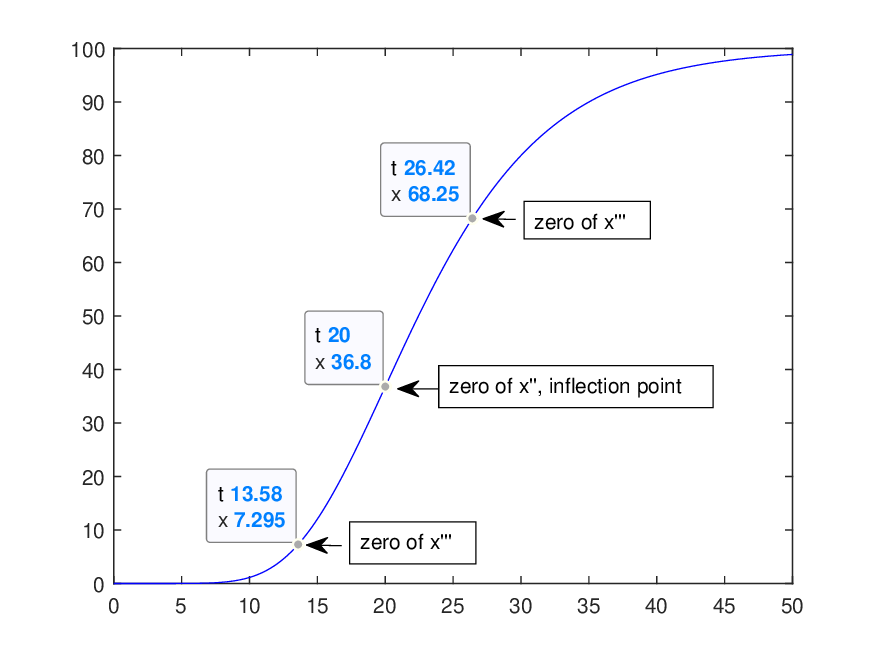}
	\end{center}
	\vspace{0mm}
	\caption{Exemplary Gompertz function with parameters $x_{sat}=100, s=0.15, t_0=20$ }
	\label{fig1}
\end{figure}

The Gompertz function (\ref{1b}) is an example of the so-called S-shaped curves and was first described and applied in actuarial mathematics in 1825 by Benjamin Gompertz \cite{G}.  Since then, the Gompertz function has found applications in probability theory (Gumbel distribution), biology, medicine, economics, engineering, physics and other fields. 
 Many
interesting applications of the Gompertz curve are
given by Waliszewski and Konarski \cite{WK}.The first hundred years of the use of this function are well described by Winsor \cite{Wi}. The interesting story of the next almost one hundred years can be found in the article by Tjørve and Tjørve \cite{TT}. In recent years, many papers have appeared in which the Gompertz function was used to describe the spread of COVID-19 pandemy (see  Ohnishi et al \cite{ONF}, Dhahbi et al \cite{DCB}, Kundu et al \cite{KBS}, Estrada and Bartesaghi \cite{EB}). 

The logistic equation defining the logistic function  $x = x(t)$  has the form 
\begin{equation}\label{1c}
	x'(t)=\frac{s}{x_{sat}}\:x(x_{sat}-x),\quad x(0)=x_{0}.
\end{equation}
where $t$ is time, and the parameters $s$-steepness or slope coefficient and $x_{sat}$-saturation level are real constants. We assume here that the saturation level can be a positive or negative number.
The integral curve $x(t)$ of equation  (\ref{1c}) satisfying the condition that  $x(t)$ lies between zero and $x_{sat}$ is called the logistic function. The logistic function
is used to describe and model various phenomena in physics, economics, medicine, biology, engineering, sociology and many
other sciences. Logistic functions now seem even more important from the point of view of their possible applications, due to the theory of the Triple Helix (TH) developed in the 1990s by Etzkowitz and Leydesdorff \cite{EL} (see also Leydesdorff \cite{L}).  This theory explains the phenomenon of creating and introducing innovations under the influence of the interaction of three factors University-Industry-Government and relations between them. According to the TH theory, the phenomenon of the emergence of innovations can be described by means of  logistic functions. Ivanova \cite{II}, \cite{II2} has shown that the KdV equation naturally appears in TH theory and has also applied it to other fields such as the COVID-19 pandemic or financial markets.

After solving the differential equation   (\ref{1c}) we obtain the logistic function in the form
\begin{equation}\label{1d}
	x(t)=\frac{x_{sat}}{1+e^{-s(t-t_0)}},
\end{equation}
where $t_0$ is the inflection point associated with the initial condition $\displaystyle x(0)=x_{0}=\frac{x_{sat}}{1+e^{st_0}}$, then $\displaystyle t_0=\frac{1}{s}\log\Big(\frac{x_{sat}-x_{0}}{x_{0}}\Big)$. At the point $t_0$, $x(t_0)=x_{sat}/2$. An exemplary logistic function is shown in the Fig~\ref{fig2}.

\begin{figure}[h!]
	\begin{center}
	 \includegraphics[height=6cm, width=8cm]{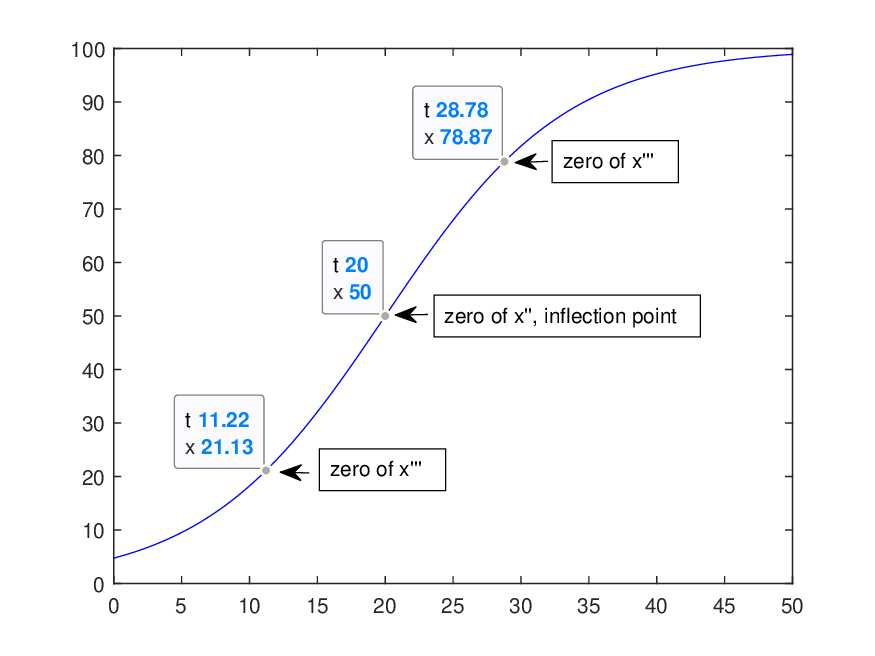}
	\end{center}
	\vspace{0mm}
	\caption{Exemplary logistic function with parameters $x_{sat}=100, s=0.15, t_0=20$ }
	\label{fig2}
\end{figure}

Mahjan et al \cite{MMB} show areas in which various S-shaped curves are used as diffusion models. The authors find that the Bass function is useful 
for modeling of consumer durable goods, retail services, agricultural, education, and industrial innovations. The logistic curve serves as model in industrial, high technology, administrative innovations, and the Gompertz function can be used for modeling consumer durable goods and agriculture innovations.

The aim of this paper is to show that,  for a given  Gompertz function, it is possible to find some logistic functions (waves) such that their sum (multilogistic function) approximates the Gompertz function with high accuracy.

The structure of the article is as follows. In Sec.~\ref{sec2} we describe the basic properties of the second-order logistic wavelets. Sec.~\ref{sec3} is devoted to show connections between the Gompertz function and the logistic function. We prove that the Gompertz function can be approximated with high accuracy by a multilogistic function.  The paper is concluded in Sec.~\ref{sec4}.

\section{Logistic wavelets}\label{sec2}
We briefly outline the basic general properties of wavelets (cf. \cite{D, MR, M}), which we will need later. A wavelet or mother wavelet (see Daubechies \cite{D}, p.24 ) is a function $\psi \in L^{1}(\mathbb{R})$ such that the following
admissibility condition holds:
\begin{equation}\label{3a}
	C_{\psi}=2\pi\int_{-\infty}^{\infty}|\xi|^{-1}|\widehat{\psi}(\xi)|^2 d\xi < \infty,
\end{equation}
where $\widehat{\psi}(\xi)$ is the Fourier transform $F(\psi)$ of $\psi$, i.e., 
\[
	F(\psi)(\xi)=\widehat{\psi}(\xi)=\frac{1}{\sqrt{2\pi}}\int_{-\infty}^{\infty}\psi(x)e^{-i\xi x} dx. \]
	
Since for $\psi \in L^{1}(\mathbb{R})$, $\widehat{\psi}(\xi)$ is continuous then condition (\ref{3a}) is only satisfied if $\widehat{\psi}(0)=0$, which is equivalent to $\int_{-\infty}^{\infty}\psi(x)dx=0$. On the other hand, Daubechies \cite{D}, p.24  points out that condition $\int_{-\infty}^{\infty}\psi(x)dx=0$  together with a slightly stronger than the integrability condition  $\int_{-\infty}^{\infty}|\psi(x)|(1+|x|)^{\alpha}dx<\infty$, for some $\alpha>0$  are sufficient for (\ref{3a}). Usually, in practice much more is assumed for the function $\psi$ hence, from a practical point of view, conditions $\int_{-\infty}^{\infty}\psi(x)dx=0$ and (\ref{3a}) are equivalent. Suppose the function $\psi$  is also square-integrable, $\psi \in L^{2}(\mathbb{R})$ with the norm
\[ || \psi ||=\left(\int_{-\infty}^{\infty}|\psi(x)|^2 dx \right)^{1/2}. \]

From a mother wavelet, one can generate a doubly-indexed family of wavelets (called children wavelets), by dilating and translating,
	\[\psi^{a,b}(x)=\frac{1}{\sqrt{a}}\psi\Big(\frac{x-b}{a}\Big),
\]
where $a,b\in \mathbb{R}, \; a> 0$. The normalization has been chosen so that $||\psi^{a,b}||=||\psi||$ for all $a, b$. In order to be able to compare different wavelets, it is convenient to normalize the wavelets, i.e., $||\psi||=1 $. 

The continuous wavelet transform (CWT) of a function $f \in L^{2}(\mathbb{R})$ for this wavelet
family is defined as
\begin{equation}\label{3b}
(T^{wav}f)(a,b)=\langle f, \psi^{a,b} \rangle =\int_{-\infty}^{\infty}f(x) \psi^{a,b}(x) dx.
\end{equation}

Rz\c{a}dkowski and Figlia \cite{RF} introduced the logistics wavelets of any order and then Rz\c{a}dkowski \cite{Rz5} presented their standardized form. 
The formula for the normalized second-order logistic mother wavelet $\psi_2(t)$  (see Fig.~\ref{fig3}) is as follows
\begin{equation}\label{3c}
	\psi_2(t)=\frac{\sqrt{30}}{1+e^{-t}}\Big(1-\frac{1}{1+e^{-t}}\Big)\Big(1-\frac{2}{1+e^{-t}}\Big)=\frac{\sqrt{30}(e^{-2t}-e^{-t})}{(1+e^{-t})^3}.
\end{equation}
Formula (\ref{3c}) is simply the second derivative of the basic logistic function $\displaystyle x(t)=\frac{1}{1+e^{-t}}$ multiplied by $\sqrt{30}$.

\begin{figure}[h!]
	\begin{center}
	 \includegraphics[height=6cm, width=8cm]{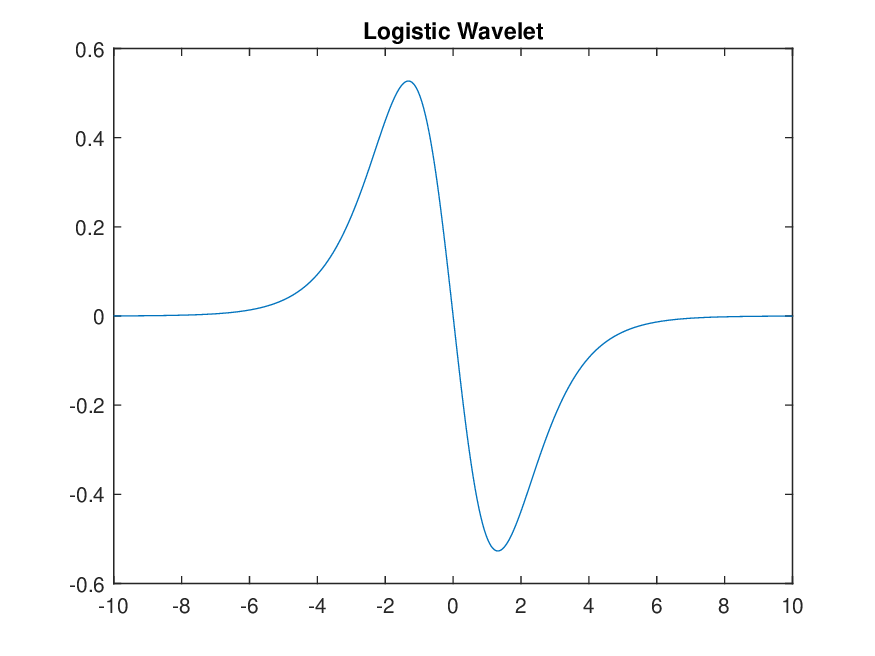}
	\end{center}
	\vspace{0mm}
	\caption{Logistic mother wavelet $\psi_2$ }
	\label{fig3}
\end{figure}

As usually, we generate from the mother wavelet a doubly-indexed family of wavelets from $\psi_2$ by dilating and translating
	\[\psi_2^{a,b}(t)=\frac{1}{\sqrt{a}}\psi_2\Big(\frac{t-b}{a}\Big),
\]
where $a,b\in \mathbb{R}, \; a > 0, \; n=2,3,\ldots$.

\section{Applications}\label{sec3}
Let $(y_n)$ be a time series. To calculate the CWT (Continuous Wavelet Transform) coefficients for the central  second differences
\[\Delta^2y_n=y_{n+1}-2y_n+y_{n-1},
\]
we use MATLAB's Wavelet Toolbox.  Assume that the time series $(y_n)$ locally follows a logistic function $\displaystyle y(t)=\frac{y_{sat}}{1+\exp(-\frac{t-b}{a})}$, i.e., $\displaystyle y_n\approx y(n)=\frac{y_{sat}}{1+\exp(-\frac{n-b}{a})}$. 
By definition  (\ref{3c}) we have
	
	\[ y''(t)=\frac{y_{sat}}{\sqrt{30}a^{3/2}}\psi_2^{a,b}(t).
\]
\begin{lemma*}
The continuous wavelet transform CWT (\ref{3b}) of the function  $y''(t)$, based on the logistic wavelets $\psi_2^{c,d}$
\[(T^{wav}y'')(c,d)=\langle y'', \psi_2^{c,d} \rangle =\int_{-\infty}^{\infty} y''(t) \psi_2^{c,d}(t) dt,
\]

takes the maximum (for $y_{sat}>0$) or minimum (for $y_{sat}<0$) value when $c=a$ and $d=b$.
\end{lemma*}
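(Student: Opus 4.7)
The plan is to reduce the CWT to an inner product of two dilated–translated copies of $\psi_2$ and then apply the Cauchy--Schwarz inequality. First I would substitute the relation $y''(t)=\frac{y_{sat}}{\sqrt{30}\,a^{3/2}}\,\psi_2^{a,b}(t)$, stated in the paragraph preceding the lemma, into the integral \eqref{3b}. Since the prefactor does not depend on $t$, it pulls outside the integral, giving
\[
(T^{wav}y'')(c,d)=\frac{y_{sat}}{\sqrt{30}\,a^{3/2}}\,\langle \psi_2^{a,b},\psi_2^{c,d}\rangle.
\]
Recall from Section~\ref{sec2} that the normalization of the mother wavelet is preserved by dilation and translation, so $\|\psi_2^{a,b}\|=\|\psi_2^{c,d}\|=\|\psi_2\|=1$. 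By Cauchy--Schwarz,
\[
|\langle \psi_2^{a,b},\psi_2^{c,d}\rangle|\le \|\psi_2^{a,b}\|\cdot\|\psi_2^{c,d}\|=1,
\]
with equality precisely when $\psi_2^{c,d}$ is a scalar multiple of $\psi_2^{a,b}$.

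Next I would verify that this bound is attained with the value $+1$ at $(c,d)=(a,b)$: in that case the inner product equals $\|\psi_2\|^2=1$, so the CWT equals $\frac{y_{sat}}{\sqrt{30}\,a^{3/2}}$. This value is positive when $y_{sat}>0$ (so it realizes the maximum over $(c,d)$) and negative when $y_{sat}<0$ (so it realizes the minimum), exactly matching the claim of the lemma.

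The part that requires a small extra argument, and the only genuine obstacle, is to rule out the possibility that some other pair $(c,d)\ne(a,b)$ also achieves equality. For this I would use the explicit formula \eqref{3c}: the factor $e^{-t}(e^{-t}-1)$ in the numerator shows that $\psi_2$ has exactly one zero, at $t=0$, is strictly positive on $(-\infty,0)$, and strictly negative on $(0,\infty)$. Consequently $\psi_2^{a,b}$ has its unique zero crossing at $t=b$ and its sign pattern forces any proportionality constant $\lambda$ in $\psi_2^{c,d}=\lambda\psi_2^{a,b}$ to be positive, hence $\lambda=1$ after matching norms, and $b=d$ after matching zero crossings. The scale $a$ is then identified by comparing the location of the (unique) positive maximum of $\psi_2^{a,b}$ with that of $\psi_2^{c,d}$, which depend linearly on $a$ and $c$ respectively. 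This gives $(c,d)=(a,b)$ and closes the proof.
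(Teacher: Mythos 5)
Your proposal is correct and follows essentially the same route as the paper: substitute $y''=\frac{y_{sat}}{\sqrt{30}\,a^{3/2}}\psi_2^{a,b}$, bound the transform by Cauchy--Schwarz using the unit norms of the children wavelets, and observe that the bound is attained (with the right sign) at $(c,d)=(a,b)$. Your additional argument ruling out any other maximizer, via the equality case of Cauchy--Schwarz and the sign/zero-crossing structure of $\psi_2$, is sound but goes beyond the paper, whose proof stops at attainment and does not address uniqueness.
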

\begin{proof}
Assume that $y_{sat}>0$. By the Cauchy-Schwartz inequality
\[|(T^{wav}y'')(c,d)|=|\langle y'', \psi_2^{c,d} \rangle |\le ||y''|| \;||\psi_2^{c,d}||=\frac{y_{sat}}{\sqrt{30}a^{3/2}}||\psi_2^{a,b}||\;||\psi_2^{c,d}||=\frac{y_{sat}}{\sqrt{30}a^{3/2}}.
\]
However the maximum is reached for $c=a$, $d=b$, because:
\[(T^{wav}y'')(a,b)=\langle y'', \psi_2^{a,b} \rangle =\frac{y_{sat}}{\sqrt{30}a^{3/2}}\langle \psi_2^{a,b} , \psi_2^{a,b} \rangle=\frac{y_{sat}}{\sqrt{30}a^{3/2}}.\]
Similarly we consider the case $y_{sat}<0$.

\end{proof}

 In view of Lemma, for the maximal value of Index we get successively

\begin{align}
	\max(\textrm{Index})&=\sum\limits_{n}\Delta^2y_n\psi_2^{a,b}(n)\approx \sum\limits_{n}\Delta^2y(n)\psi_2^{a,b}(n)
	\approx \int_{-\infty}^{\infty}y''(t)\psi_2^{a,b}(t)dt \nonumber \\
	& =\int_{-\infty}^{\infty} \frac{y_{sat}}{\sqrt{30}a^{3/2}}\psi_2^{a,b}(t)\psi_2^{a,b}(t)dt
	=\frac{y_{sat}}{\sqrt{30}a^{3/2}}
	\int_{-\infty}^{\infty}(\psi_2^{a,b}(t))^2 dt= \frac{y_{sat}}{\sqrt{30}a^{3/2}}.\label{5a}
\end{align}
Two parameters of a logistic wave, $b$ - shift (translation) and $a$ - dilation, can be read from the CWT scalogram by finding a point where the sum (\ref{5a}) (denoted in the scalogram by Index) is locally maximal (or locally minimal). It remains to determine the third parameter of the wave, i.e., its saturation level $y_{sat}$. Using (\ref{5a}) we can estimate the saturation level $y_{sat}$ as follows
\begin{equation}\label{5b}
y_{sat}\approx \sqrt{30}a^{3/2}\sum\limits_{n}\Delta^2y_n\psi_2^{a,b}(n)= \sqrt{30}a^{3/2}\max(\textrm{Index}).
\end{equation}
Similarly we get
\begin{equation}\label{5c}
y_{sat}\approx \sqrt{30}a^{3/2}\sum\limits_{n}\Delta^2y_n\psi_2^{a,b}(n)= \sqrt{30}a^{3/2}\min(\textrm{Index}).
\end{equation}

Consider the Gompertz function
\[x(t)=100,000\exp(-\exp(-\frac{t-50}{10})),
\]
with parameters $x_{sat}=100,000; \; t_0=50; \; s=0.1$ and a time series $y_n$ following the same exact Gompertz growth trend (Fig.~\ref{f4a})
\begin{equation}\label{4a}
y_n=x(n)=100,000\exp(-\exp(-\frac{n-50}{10}))\quad \quad n=0,1,2,\ldots, 201.
\end{equation}
Then we calculate the central first differences  (Fig.~\ref{f4b})
 \[\Delta^1y_n=(y_{n+1}-y_{n-1})/2, \;\; n=1,2,\ldots 200, \]
and  the central second  differences  (Fig.~\ref{f4c})
 \[\Delta^2y_n=y_{n+1}-2y_n+y_{n-1}, \;\; n=1,2,\ldots, 200. \]
For the central second differences, we apply the MATLAB's cwt function, which uses second-order logistic wavelets  (Fig.~\ref{f4d}). The marked point indicates the maximum of the Index. Using (\ref{5b}) we can estimate the parameters of the first logistic wave, which best fits the Gompertz trend $(y_n)$:
\[a=6.115,\;b=50,\; y_{sat}=\sqrt{30}\cdot 6.115^{3/2}\cdot 1062=87959.
\]
Thus we obtain logistic function
 \[h(t)=\frac{87959}{1+e^{-\frac{t-50}{6.115}}}.
 \]
 \begin{figure}[h!]
\centerline{}
\centerline{}
\centering
\begin{tabular}{ll}
\begin{subfigure}{7cm}
\centering
\includegraphics[width=7cm,height=5cm]{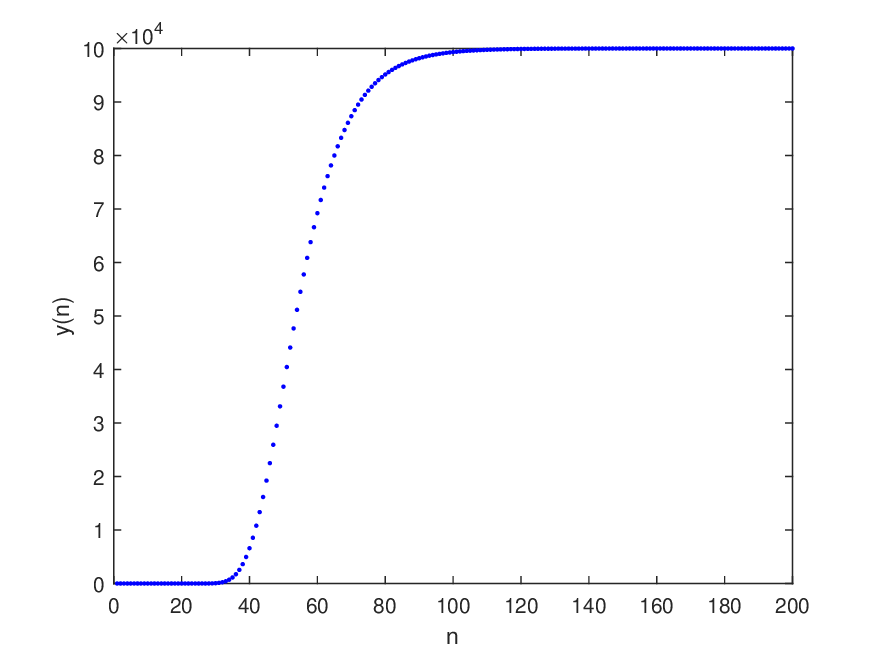}
\caption{Time series $y_n$ with the Gompertz growth (\ref{4a})}
\label{f4a}
\end{subfigure}
&
\begin{subfigure}{7cm}
\centering
\includegraphics[width=7cm,height=5cm]{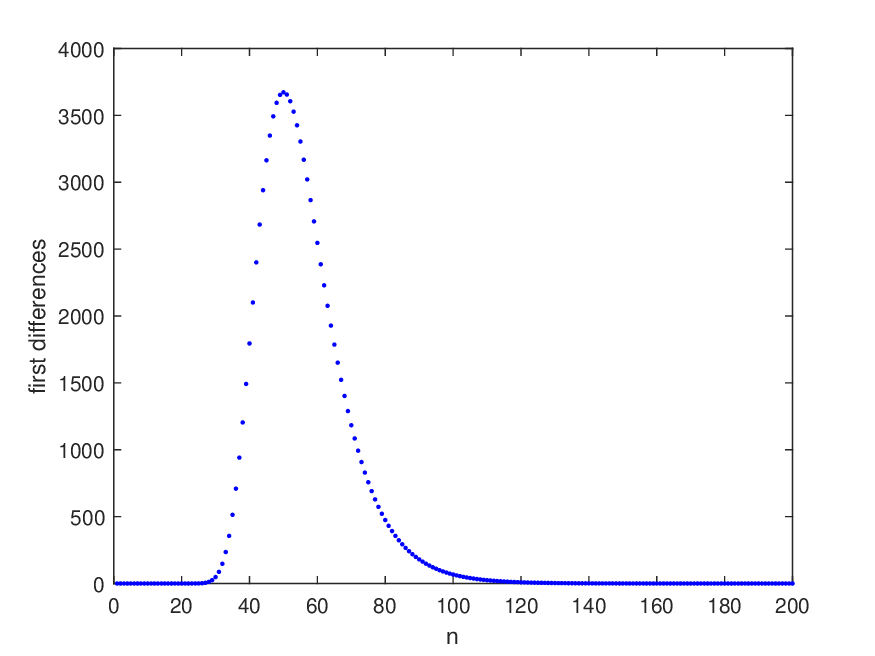}
\caption{First differences $\Delta^1y_n$}
\label{f4b}
\end{subfigure}
\end{tabular}
\centerline{}
\centerline{}
\centering
\begin{tabular}{ll}
\begin{subfigure}{7cm}
\centering
\includegraphics[width=7cm,height=5cm]{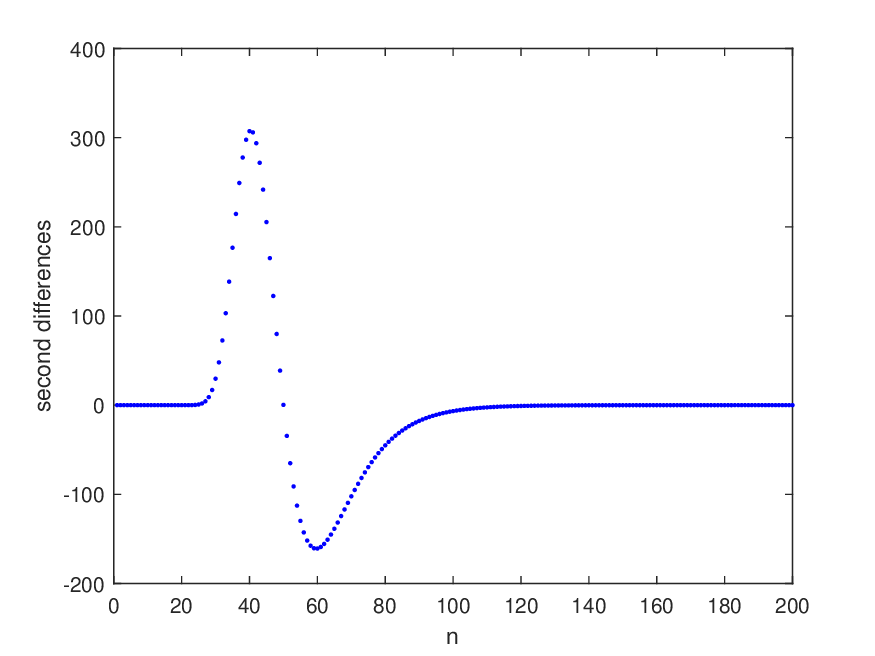}
\caption{ Second differences  $\Delta^2y_n$}
\label{f4c}
\end{subfigure}
&
\begin{subfigure}{7cm}
\centering
\includegraphics[width=7cm,height=5cm]{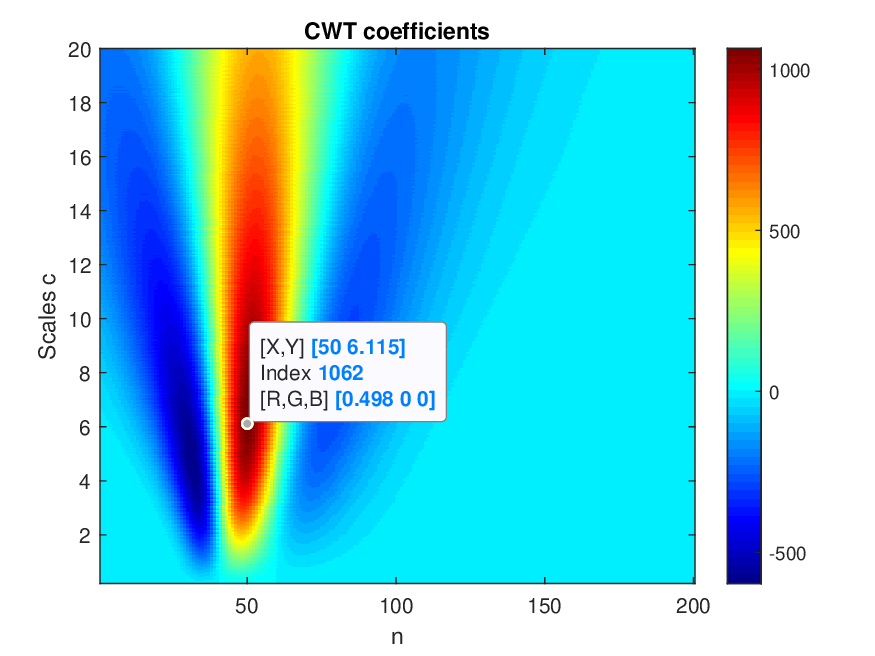}
\caption{CWT scalogram for the second differences (c)}
\label{f4d}
\end{subfigure}
\end{tabular}
\caption{Graphs and the CWT scalogram for the time series (\ref{4a}), with exact Gompertz growth}
\label{fig4}
\end{figure}

Then we repeat this procedure for the time series $(y_n-h(n))$, Fig.~\ref{f5a}.  We calculate its first differences (Fig.~\ref{f5b}) and the second differences (Fig.~\ref{f5c}), for which we again apply the MATLAB's cwt function. (Fig.~\ref{f5d}) shows two points (the minimum and the maximum of Index) giving the next two logistic waves with the following parameters:
\[a=4.028,\;b=34,\; y_{sat}=\sqrt{30}\cdot 4.028^{3/2}\cdot (-246.4)=-10910,
\]
and
\[a=6.74,\;b=66,\; y_{sat}=\sqrt{30}\cdot 6.74^{3/2}\cdot 226.4=21698.
\]
For calculation of the saturations levels we used (\ref{5c}) and  (\ref{5b}) respectively.

\begin{figure}[h!]
\centerline{}
\centering
\begin{tabular}{ll}
\begin{subfigure}{7cm}
\centering
\includegraphics[width=7cm,height=5cm]{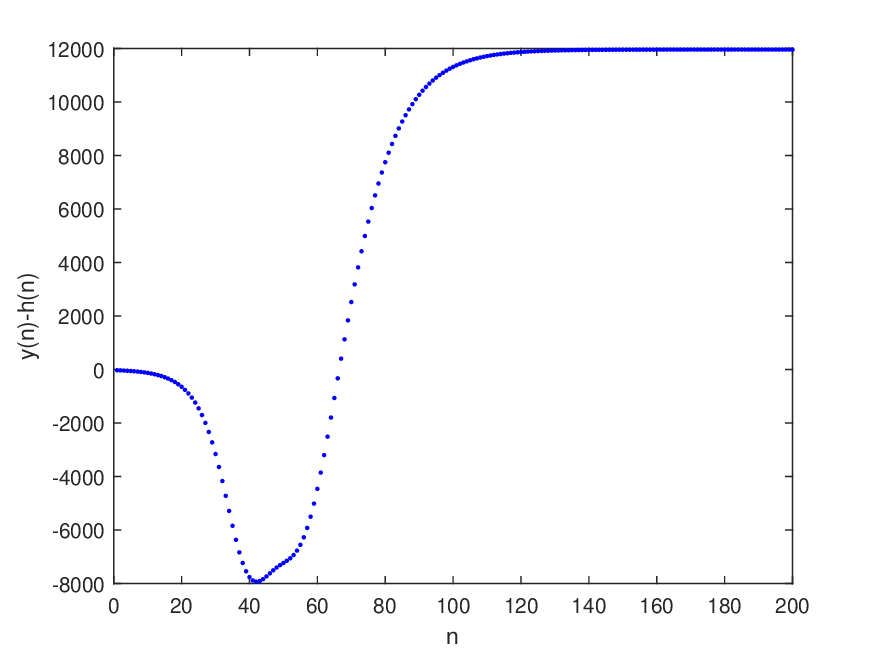}
\caption{Time series ($y_n-h(n)$)}
\label{f5a}
\end{subfigure}
&
\begin{subfigure}{7cm}
\centering
\includegraphics[width=7cm,height=5cm]{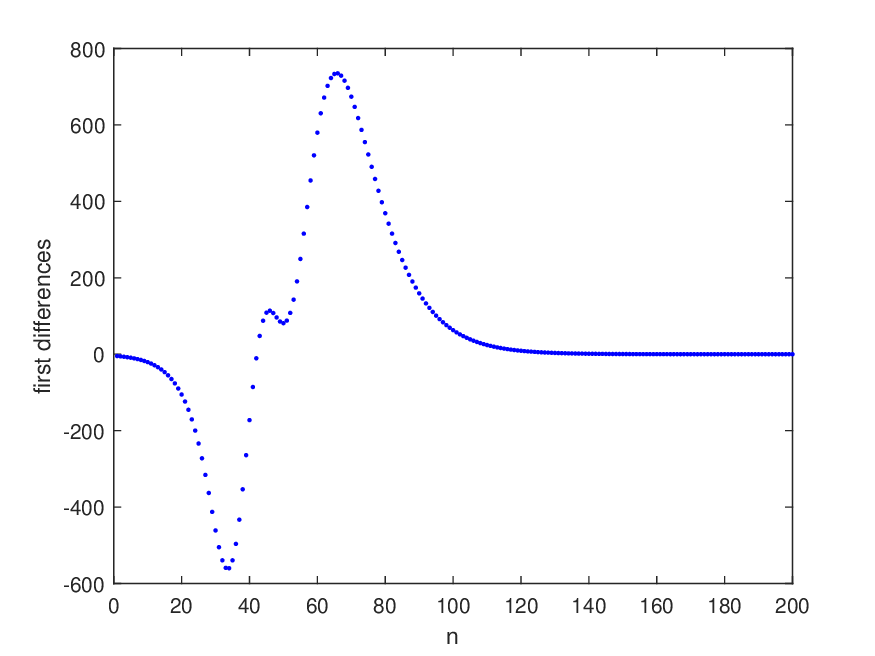}
\caption{First differences of (a)}
\label{f5b}
\end{subfigure}
\end{tabular}
\centerline{}
\centerline{}
\centering
\begin{tabular}{ll}
\begin{subfigure}{7cm}
\centering
\includegraphics[width=7cm,height=5cm]{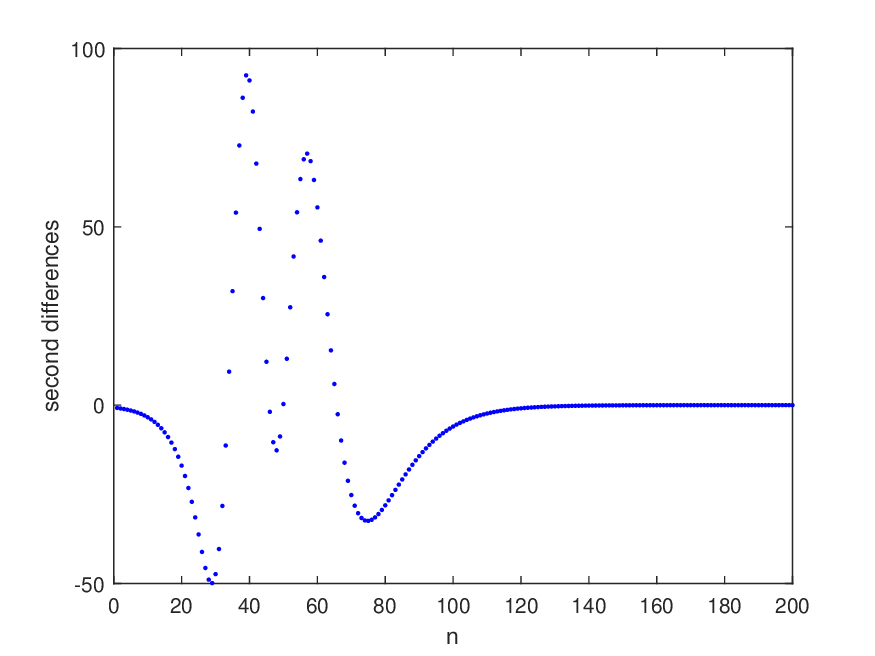}
\caption{ Second differences of (a)}
\label{f5c}
\end{subfigure}
&
\begin{subfigure}{7cm}
\centering
\includegraphics[width=7cm,height=5cm]{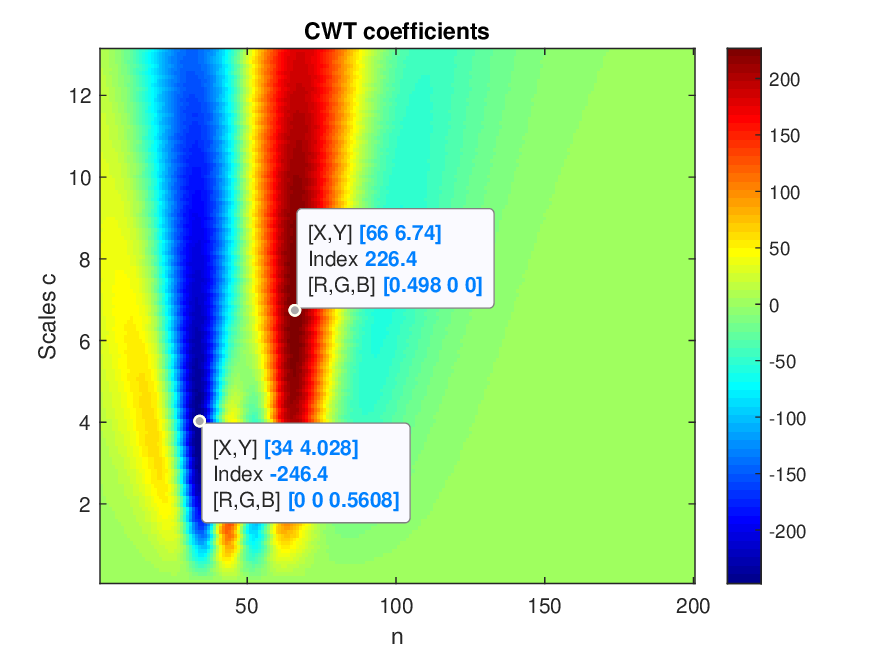}
\caption{CWT scalogram for second differences (c) }
\label{f5d}
\end{subfigure}
\end{tabular}
\caption{Graphs and the CWT scalogram showing the other two logistic waves}
\label{fig5}
\end{figure}

The three logistic waves described above, when summed to a multilogistic function, approximate the time series $y_n$ with the maximum absolute error equal to 1808, with the root mean square error (RMSE) equal to 969 and $R^2 = 0.9998$. The RMSE error is mainly influenced by deviations, related to the mismatch of saturation levels, for values of $n$ greater than 100. The value of these deviations  is approximately $100000-87959+10910-21698=1253.$ After optimizing the parameters with the objective function - minimization of the maximum absolute error - we get the multilogistic approximating function in the following form:
\begin{equation}\label{4b}
f(t)=\frac{88057}{1+e^{-\frac{t-50}{6.17}}}-\frac{10919}{1+e^{-\frac{t-33.55}{5.12}}}+\frac{22846}{1+e^{-\frac{t-67.17}{8.77}}}.
\end{equation}
The multilogistic function (\ref{4b}), $f(t)$ approximates the Gompertz growth trend $y_n$,  (\ref{4a}) with errors (see Fig~\ref{fig6}):
\[\max\limits_{0\le n \le 201} |y_n-f(n)|=525, \quad \textrm{RMSE}=\sqrt{\frac{1}{202}\sum\limits_{n=0}^{201}(y_n-f(n))^2}=160,\quad R^2=0.999985.
\]

\begin{figure}[h!]
	\begin{center}
	 \includegraphics[height=6cm, width=8cm]{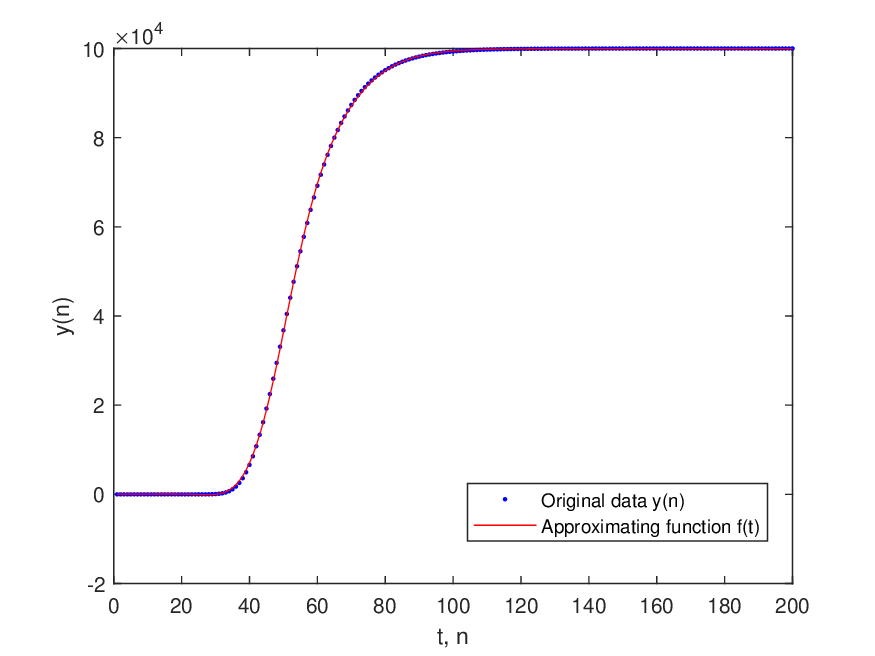}
	\end{center}
	\vspace{0mm}
	\caption{Approximating function $f(t)$, (\ref{4b})}
	\label{fig6}
\end{figure}

\section{Conclusions}\label{sec4}
In the present paper we showed that the Gompertz curve can be approximated, with high accuracy, by a multilogistic function consisting of three logistic functions. We have shown this using an example of one specific time series having the exact Gompertz growth. Obviously, this applies to any Gompertz curve by linearly replacing the variables. 

The question arises whether it is possible to interpret somehow the subsequent components of the multilogistic function (\ref{4b}). It seems that the first component  in (\ref{4b}) could be interpreted as a diffusion model in a favorable business environment. The second function, with a negative saturation level, could play the role of an inhibitory influence of business competitors, trying to prevent a new, innovative product from entering the market. This last function can be interpreted as a kind of strengthening function (boost function) that appears after the role of the competitors is effectively limited. All these interactions cause the appearance of the Gompertz curve.

\vspace{2mm}

{\large\textbf{Conflict of Interests}}  

The author declares no conflict of interests related to the submitted manuscript.

\vspace{2mm}
{\large\textbf{Funding statement}}

The research was partially funded by the ’IDUB against COVID-19’ project granted by the Warsaw University of Technology (Warsaw, Poland) under the program Excellence Initiative: Research University (IDUB), grant no 1820/54/201/2020.

%\bibliographystyle{plain}
%\bibliography{Gomp_lgs}

\end{document}